\numberwithin{equation}{section}
\def\R{\mathbb R}
\def\Q{\mathbb Q}
\def\C{\mathbb C}
\def\N{\mathbb N}
\def\Z{\mathbb Z}
\def\H{\mathbb H}
\def\D{\mathbb D}
\def\FF{\mathcal F}
\def\JJ{\mathcal J}
\def\re{\operatorname{Re}}
\def\im{\operatorname{Im}}
\def\arctanh{\operatorname{arctanh}}
\def\dist{\operatorname{dist}}
\newtheorem{lem}{Lemma}[section]
\newtheorem{thm}{Theorem}[section]
\newtheorem*{thma}{Theorem A}
\theoremstyle{definition}
\newtheorem{defin}{Definition}[section]
\theoremstyle{remark}
\title[Baker Domains]{Some examples of Baker domains}
\author{Walter Bergweiler}
\address{Mathematisches Seminar,
Christian--Albrechts--Universit\"at zu Kiel,
Lude\-wig--Meyn--Str.~4, D--24098 Kiel, Germany}
\email{bergweiler@math.uni-kiel.de}
\author{Jian-Hua Zheng}
\thanks{The first author is supported
by a Chinese Academy of Sciences Visiting Professorship for Senior
International Scientists, Grant No.\ 2010 TIJ10, the Deutsche
Forschungsgemeinschaft, Be 1508/7-2, 
and the ESF Networking Programme HCAA. The second author
is 
supported by the NSF of China, Grant No. 10871108.}
\address{Department of Mathematical Sciences, Tsinghua University, P.\ R.\ China}
\email{jzheng@math.tsinghua.edu.cn}
\begin{document}
\begin{abstract}
We construct entire functions with hyperbolic and simply parabolic
Baker domains on which the functions are not univalent.
The Riemann maps from the unit disk to these Baker domains 
extend continuously to certain arcs on the unit circle.
The results answer questions posed by Fagella and Henriksen,
Baker and Dom\'inguez, and others.
\end{abstract}

\maketitle
\section{Introduction and main results}\label{intr}
The \emph{Fatou set} $\FF(f)$ of an entire function $f$ is the subset
of the complex plane $\C$ where the iterates $f^n$ of $f$ form
a normal family. Its complement $\JJ(f)=\C\setminus\FF(f)$ is the
\emph{Julia  set}. The connected components of $\FF(f)$ are called
\emph{Fatou components}. As in the case of rational functions
there exists a classification of periodic Fatou components 
(cf.~\cite[Section~4.2]{Bergweiler1993}), 
the new feature for transcendental functions being
Baker domains. By definition, a periodic Fatou component $U$ 
is called a \emph{Baker domain} if $f^n|_U\to\infty$ as $n\to\infty$.
The first example of a Baker domain was already given by 
Fatou~\cite[Exemple~I]{Fatou1926} who proved that $f(z)=z+1+e^{-z}$ has
a Baker domain $U$ containing the right halfplane.
Since then many further examples have been given; see~\cite{Rippon2008}
for a survey.

By a result of Baker~\cite{Baker1975}, the domains today named after him are
simply connected. 
A result of Cowen~\cite{Cowen1981} then leads to a
classification of Baker domains, which has turned out to be very
useful.
We introduce this classification following Fagella and 
Henriksen~\cite{FagellaHenriksen},
but note that there are a number of equivalent ways to state it;
see section~\ref{classification} for a detailed discussion.
For simplicity, and without loss of generality,
we consider only the case of an invariant Baker 
domain; that is, we assume that $f(U)\subset U$.
We define an equivalence relation on $U$ by saying that 
$u,v\in U$ are equivalent if there exist $m,n\in\N$ such
that $f^m (u)=f^n(v)$ and we denote by $U/f$ the set of equivalence
classes.
The result of Fagella and Henriksen is the 
following~\cite[Proposition~1]{FagellaHenriksen}.
\begin{thma}
Let $U$ be an invariant Baker domain of the entire function~$f$.
Then $U/f$ is a Riemann surface conformally equivalent to
exactly one of the following cylinders:
\begin{itemize}
\item[(a)] $\{z\in\C: -s<\im z< s\}/\Z$, for some $s>0$;
\item[(b)] $\{z\in\C: \im z> 0\}/\Z$;
\item[(c)] $\C/\Z$.
\end{itemize}
\end{thma}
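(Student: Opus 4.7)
The plan is to combine Baker's theorem on simple connectivity with Cowen's classification of holomorphic self-maps of the disk. Since $U$ is simply connected, pick a Riemann map $\varphi:\D\to U$ and form the self-map $g=\varphi^{-1}\circ f\circ\varphi$ of $\D$. Because $f^n\to\infty$ on $U$, $g$ has no fixed point in $\D$, and the Denjoy--Wolff theorem yields a boundary point $\zeta\in\partial\D$ to which the iterates $g^n$ converge locally uniformly.

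Next I would apply Cowen's theorem to $g$: there exist a simply connected absorbing subdomain $V\subset\D$, a domain $\Omega$, a M\"obius transformation $T:\Omega\to\Omega$ without fixed points in $\Omega$, and a holomorphic map $\psi:\D\to\Omega$ satisfying the semi-conjugacy $\psi\circ g=T\circ\psi$; moreover $\psi$ is univalent on $V$, and $\psi(V)$ is absorbing for $T$ in $\Omega$. The pair $(\Omega,T)$ is, up to conformal conjugacy, one of three standard models: $T(w)=\lambda w$ with $\lambda>1$ on a half-plane (hyperbolic), $T(w)=w+1$ on a half-plane (simply parabolic), or $T(w)=w+1$ on $\C$ (doubly parabolic).

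What remains is to identify the quotient $U/f$ with $\Omega/\langle T\rangle$. The map $\Psi:=\psi\circ\varphi^{-1}:U\to\Omega$ sends $f$-equivalent points to $T$-equivalent ones because of the functional equation. For the converse, given $T^k(\Psi(u))=T^l(\Psi(v))$ one has $\Psi(f^k(u))=\Psi(f^l(v))$; applying further iterations one may assume both $\varphi^{-1}(f^m(u))$ and $\varphi^{-1}(f^n(v))$ lie in $V$, and the injectivity of $\psi|_V$ then forces $f^m(u)=f^n(v)$. Hence $\Psi$ descends to a bijection $U/f\to\Omega/\langle T\rangle$, which inherits the structure of a conformal equivalence of Riemann surfaces. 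A direct computation of each model quotient now yields the three cylinders: the logarithm conjugates $w\mapsto\lambda w$ on a half-plane to a translation on a strip, producing cylinder (a) with $s=\pi/(2\log\lambda)$; the simply parabolic half-plane modulo integer translation gives (b); and $\C$ modulo integer translation gives (c).

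The chief difficulty lies in the descent to the quotient: the semi-conjugacy $\psi$ is in general not injective on $\D$, so one must use both the absorbing property of $V$ and the injectivity of $\psi|_V$ to show that the induced map on equivalence classes is a well-defined bijection, and moreover a biholomorphism of Riemann surfaces. Once this is in place, the concrete identification of $\Omega/\langle T\rangle$ with one of the cylinders (a), (b), (c) is a routine conformal calculation.
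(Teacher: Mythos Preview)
The paper does not itself prove Theorem~A; it is quoted as \cite[Proposition~1]{FagellaHenriksen}, and the surrounding discussion in Section~\ref{classification} instead records Cowen's eventual-conjugacy theorem (Lemma~\ref{lemma2.1}) and remarks that its three cases correspond to the three cylinders of Theorem~A. Your proposal is precisely the argument that bridges these two statements---pull $f|_U$ back to a fixed-point-free self-map $g$ of $\D$ via a Riemann map, invoke Cowen to obtain the model $(\Omega,T)$ together with an absorbing $V$ on which $\psi$ is univalent, and then show that the semiconjugacy $\Psi$ descends to a conformal bijection $U/f\to\Omega/\langle T\rangle$---and it is the standard and correct route. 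The one point worth making explicit is that the Riemann surface structure on $U/f$ is most cleanly \emph{defined} via this bijection with $\Omega/\langle T\rangle$ (where $T$ acts freely and properly discontinuously), since the grand-orbit quotient by a non-injective holomorphic map does not obviously carry one a priori; your absorbing-domain argument handles this, but you should say so.
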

We call $U$ \emph{hyperbolic} in case (a), \emph{simply parabolic} in case
(b) and \emph{doubly parabolic} in case (c).
More generally, the above classification
 holds when $U$ is a simply connected domain,
$U\neq\C$, and $f:U\to U$ is holomorphic without fixed point in~$U$.

While we defer a detailed discussion of this classification to
section~\ref{classification}, we indicate where the names for the different types
of Baker domains come from.
Since $U$ is simply connected, there exists 
a conformal map $\phi:\D\to U$, where $\D$ is the unit disk.
Then $g=\phi^{-1}\circ f\circ \phi$
maps $\D$ to $\D$. (In fact, it can be shown that $g$ is an
inner function.) By the Denjoy-Wolff-Theorem, there exists
$\xi\in\overline{\D}$ such that $g^n\to\xi$  as $n\to\infty$.
As $f$ has no fixed point in $U$ and thus $g$ has no fixed
point in $\D$, we actually find that $\xi\in\partial \D$.
Now suppose that $g$ extends analytically to a neighborhood of $\xi$.
Then $g(\xi)=\xi$ and
$U$ is hyperbolic if $\xi$ is an attracting fixed point,
$U$ is simply parabolic if $\xi$ is a parabolic point with one petal and
$U$ is doubly parabolic if $\xi$ is a parabolic point with two petals.

We note that the Baker domain $U$ of
Fatou's example  $f(z)=z+1+e^{-z}$ mentioned above is doubly parabolic.
For example, this follows directly from Lemma~\ref{lemma2.2} below.
It is known (see the remark after
Lemma~\ref{lemma2.3} below) that if $U$ is a doubly 
parabolic Baker domain, then $f|_U$ is not univalent.
Equivalently, $U$ contains a singularity of the inverse 
function of $f:U\to U$.
(For Fatou's example 
it can be checked directly that $f:U\to U$ is not univalent,
as $U$ contains the critical points $2\pi i k$, with $k\in\Z$.)
On the other hand,  $f|_U$  may be univalent in a hyperbolic
or simply parabolic Baker domain. The first examples of
Baker domains where the function is univalent where given by
Herman~\cite[p.~609]{Herman1985} and 
Eremenko and Lyubich~\cite[Example~3]{EremenkoLyubich1987}.
We mention some further examples of Baker domains:
\begin{itemize}
\item[(1)]
simply parabolic Baker domains in which the
function is univalent (\cite[Theorem~3]{BakerWeinreich},
\cite[Section~5.3]{BaranskiFagella} and
\cite[Section~4, Example~2]{FagellaHenriksen});

\item[(2)]
 hyperbolic Baker domains in which
the function is univalent (\cite[Sections~5.1 and~5.2]{BaranskiFagella},
\cite[Theorem~1]{Bergweiler1995} and
 \cite[Section~4, Example~1]{FagellaHenriksen};

\item[(3)]
 hyperbolic Baker domains in which
the function is not univalent (\cite[Theorem~3]{Rippon},
and \cite[Theorems~2 and~3]{RipponStallard};
\end{itemize}

We note that there are no examples of simply parabolic Baker domains in 
which the function is not univalent.
Therefore it was asked in~\cite[Section~4]{FagellaHenriksen}
and~\cite[p.~203]{Zheng1}
whether such domains actually exist. 
Our first result says that this is in fact the case.
\begin{thm} \label{thm1}
There exists an entire function $f$ with a simply parabolic
Baker domain in which $f$ is not univalent.
\end{thm}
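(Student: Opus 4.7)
The plan is to construct $f$ by an approximation argument rather than by an explicit formula, since the conjunction of non-univalence with simply parabolic type is a delicate dynamical condition hard to arrange by hand. I would pick a chain of closed disks $D_n=\overline{D(z_n,r_n)}$ with $z_n\to\infty$ along some curve, chosen so that the $D_n$ are pairwise disjoint and $\C\setminus\bigcup_n D_n$ is connected. On a neighbourhood of $\bigcup_n D_n$ one defines a \emph{model} holomorphic map $\psi$ that sends $D_n$ onto $D_{n+1}$ as a branched $2$-to-$1$ covering, affinely conjugate to $z\mapsto z^2$, so that $\psi$ has a critical point in each $D_n$. Arakelyan's theorem (or a Runge exhaustion) then produces an entire function $f$ with $|f-\psi|\le\eps_n$ on $D_n$, where the errors $\eps_n$ decay fast enough that the orbit and critical-point structure of $\psi$ survive the perturbation.

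A routine stability argument then yields a component $U$ of $\FF(f)$ containing $D_1$ with $f(D_n)\subset D_{n+1}$ for all $n$, so that $f^n\to\infty$ on $U$; that is, $U$ is an invariant Baker domain. Because each critical point of $\psi$ in $D_n$ perturbs to a critical point of $f$ still contained in $U$, the map $f|_U$ is not univalent. It remains to force $U$ to be of the simply parabolic type in Theorem~A.

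This last step is the main obstacle. By the criteria developed in Section~\ref{classification}, in particular those underlying Lemma~\ref{lemma2.2} and Lemma~\ref{lemma2.3}, the type of $U$ is controlled by the asymptotic behaviour of the hyperbolic distances $d_U(f^n(z_0),f^{n+1}(z_0))$, or equivalently by the conformal modulus of the quotient $U/f$. The hyperbolic case corresponds to these distances being bounded below, the doubly parabolic case to a specific divergence condition, and the simply parabolic case lies strictly between. The delicate point is thus to tune the geometric data $(z_n,r_n)$: one should take $r_n/|z_{n+1}-z_n|\to 0$ (to rule out case~(a) of Theorem~A) while ensuring that the associated summability condition places $U$ in case~(b) rather than case~(c). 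Verifying that the resulting quotient surface is conformally a half-plane cylinder, through explicit hyperbolic-metric estimates on the disks $D_n$ and their orbits, will be the technical heart of the construction.
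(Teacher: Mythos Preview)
Your approach has a genuine gap at exactly the point you identify as ``the main obstacle'': controlling the type of $U$. First, your heuristic is inverted. Taking $r_n/|z_{n+1}-z_n|\to 0$ means the step $|w_{n+1}-w_n|$ is \emph{large} compared to the disk radius, and if $\partial U$ stays near $\partial D_n$ one gets $|w_{n+1}-w_n|/d_n\to\infty$, which is consistent with the hyperbolic case~(a), not an obstruction to it. More importantly, the simply parabolic case in Lemma~\ref{lemma2.3} requires \emph{both} $\ell(z)>0$ for every $z\in U$ \emph{and} $\inf_{z\in U}\ell(z)=0$. The orbits you actually control --- those passing through the chain $D_n$ --- all have hyperbolic step bounded below, so the second condition must come from orbits in parts of $U$ lying outside $\bigcup_n D_n$. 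But an Arakelyan-type construction gives you no information about $f$ off the prescribed set, hence no information about the shape of $U$ there, hence no way to estimate $\rho_U$ on those orbits. Without that, you cannot distinguish the simply parabolic from the hyperbolic case, and nothing in your sketch produces the required orbits with $\ell(z)$ arbitrarily small but positive.

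The paper's proof avoids this difficulty by working in the opposite direction. It begins with Herman's function $h(z)=2\pi i(\alpha+m)+z+e^z$, which already has a simply parabolic Baker domain $V$ on which $h$ is univalent. One then performs quasiconformal surgery on $h$ inside a half-strip far from the absorbing region of $V$, replacing $h$ there by a map $k$ whose large derivative forces Julia set into the modified region. Shishikura's lemma straightens the resulting quasiregular $F$ to an entire $f$. The key structural point is Lemma~\ref{lemma2.5}: the type of a Baker domain is determined by any absorbing subdomain, and the surgery leaves an absorbing subdomain $V_0\subset V$ untouched (with $\psi$ conformal there), so the new Baker domain $U$ inherits the simply parabolic type automatically. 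Non-univalence is then established \emph{a posteriori} by a hyperbolic-metric argument: one exhibits points $w_1,w_2,w_3=F(w_2)=F^2(w_1)$ with $\rho_W(w_2,w_3)$ bounded but $\rho_W(w_1,w_2)\to\infty$, which via Schwarz--Pick is incompatible with $f|_U$ being a hyperbolic isometry. Thus the two requirements --- simply parabolic type and non-univalence --- are decoupled, and the delicate tuning you propose is never needed.
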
 
It turns out 
that the function constructed also provides
an answer to a question about the boundary of
Baker domains which arises from
the work of Baker and Weinreich~\cite{BakerWeinreich},
Baker and Dom\'inguez~\cite{BakerDominguez},
Bargmann~\cite{Bargmann}
and Kisaka~\cite{Kisaka1997,Kisaka1998}.

For a conformal map
$\phi:\D\to U$ let $\Xi$ be the set of $\xi\in\partial \D$
such that $\infty$ is contained in the 
cluster set of $\phi$ at $\xi$ and let $\Theta$ 
be the set of $\xi\in\partial \D$ such that 
$\lim_{r\to 1} \phi(r\xi)=\infty$. Clearly, $\overline{\Theta}\subset \Xi$.
The sets $\Theta$ and $\Xi$ depend on the choice of the conformal
map $\phi$, but
we will only be concerned with the question whether $\Xi$ is
equal to $\partial \D$ or $\Theta$ is dense in  $\partial \D$,
and these statements are independent of~$\phi$.

Devaney and Goldberg~\cite{DevaneyGoldberg} showed that 
if $\lambda\in\C\setminus\{0\}$ is such that $f(z)=\lambda e^z$
has an attracting fixed point and $U$ denotes its attracting
basin, then $\Theta$ is dense in $\partial \D$.
Baker and Weinreich~\cite[Theorem~1]{BakerWeinreich}
proved that if $f$ is an arbitrary transcendental entire function
and $U$ is an unbounded invariant Fatou component  of $f$
which is not a Baker domain and which thus -- by the classification
of periodic Fatou components -- is an attracting or parabolic basin
or a Siegel disk, then $\Xi=\partial \D$. 
Baker and Dom\'inguez~\cite[Theorem~1.1]{BakerDominguez} showed, under the same
hypothesis, that if $\Theta\neq \emptyset$, then 
$\overline{\Theta}=\partial \D$.
Under additional hypotheses this had been proved before by 
Kisaka~\cite{Kisaka1997,Kisaka1998}.  

As shown by Baker and Weinreich~\cite[Theorem~3]{BakerWeinreich}, 
the above results need not hold for Baker 
domains: they gave an example of a Baker domain bounded by
a Jordan curve on the sphere. Clearly, for this example the sets
$\Xi$ and $\Theta$ consist of only one point. 
On the other hand,
Baker and Weinreich~\cite[Theorem~4]{BakerWeinreich} showed
that if a Baker domain $U$ is bounded by a Jordan curve on the 
sphere, then $f$ is univalent in~$U$.

If $U$ is a Baker
domain, then $\infty$ is accessible in $U$ and thus we always
have $\Theta\neq \emptyset$ for Baker domains.
Baker and Dom\'inguez~\cite[Theorem~1.2]{BakerDominguez}
showed that if $U$ is a Baker domain where $f$ is not univalent, then 
$\overline{\Theta}$ contains a  perfect subset of $\partial \D$.
Again, this had been proved before by Kisaka~\cite{Kisaka1997,Kisaka1998}
under additional hypotheses.

Baker and Dom\'inguez~\cite[p.~440]{BakerDominguez} asked whether
even
 $\overline{\Theta}=\partial \D$ if $U$ is a Baker domain where
$f$ is not univalent. It was shown by 
Barg\-mann~\cite[Theorem~3.1]{Bargmann} 
that this is in fact the case for doubly parabolic Baker domains.
With $\overline{\Theta}$ replaced by $\Xi$ this result
appears in~\cite{Kisaka1997}.
The question whether these results also hold for
hyperbolic and simply parabolic  Baker domains was left open
in these papers.

It turns out 
that the Baker domain constructed in Theorem~\ref{thm1}
can be chosen such that 
\begin{equation}\label{Xi}
\Xi\neq \partial \D.
\end{equation}
In particular, this implies that $\overline{\Theta}\neq \partial \D$.
A modification of the method also yields an example of a hyperbolic 
Baker domain with this property. We thus have the following result.

\begin{thm} \label{thm2}
There exists an entire function $f_1$ with a simply parabolic
Baker domain $U_1$ such that $f_1|_{U_1}$ is not univalent and 
the set $\Xi$ defined above satisfies~\eqref{Xi}.

There also exists an entire function $f_2$ with a hyperbolic
Baker domain $U_2$ satisfying~\eqref{Xi} such that $f_2|_{U_2}$
is not univalent.
\end{thm}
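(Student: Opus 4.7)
The plan is to construct $f_1$ and $f_2$ by refining the approximation scheme underlying Theorem~\ref{thm1}. In each case I would specify two disjoint simply connected ``model'' regions $V_1,V_2\subset\C$ together with a holomorphic map $h:V_1\cup V_2\to V_1$ whose action on each $V_j$ is conjugate to a prescribed automorphism of the upper half-plane, and then realize $h$ as (approximately) the restriction of an entire function $f$ to $V_1\cup V_2$ by means of an Arakelyan-type approximation theorem. If the approximation is sharp enough, both $V_1$ and $V_2$ sit in the same invariant Baker domain $U$ of $f$, and $f$ carries distinct points of $V_1$ and $V_2$ to common images in $V_1$, which gives the non-univalence of $f|_U$.

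For the simply parabolic $f_1$, I would conjugate $h$ on each $V_j$ to the translation $\zeta\mapsto\zeta+1$ of the upper half-plane, so that the quotient $U_1/f_1$ is conformally the upper half-plane modulo translation, i.e.\ case~(b) of Theorem~A. For the hyperbolic $f_2$, I would instead conjugate $h$ on each $V_j$ to a dilation $\zeta\mapsto\lambda\zeta$ with $0<\lambda<1$, so that the quotient is a finite cylinder and we are in case~(a). In both cases, non-univalence comes for free from the choice of $h$, independent of its type.

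To force $\Xi\neq\partial\D$, I would arrange $V_1$, $V_2$, and all sets on which $h$ is approximated, to lie inside a single horizontal strip $S=\{|\im z|<R\}$, and demand that the Arakelyan approximation make $|f(z)|$ so small outside a slightly larger strip that no orbit starting in $\C\setminus S$ can ever enter~$S$. The Baker domain $U$ containing $V_1\cup V_2$ is then itself confined to a horizontal strip, and since such a strip leaves a definite arc at infinity uncovered on the Riemann sphere, an arc of $\partial\D$ is mapped by the Riemann map $\phi:\D\to U$ to boundary points at which $\infty$ is not in the cluster set, yielding~\eqref{Xi}.

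The main obstacle is satisfying these three requirements simultaneously: existence of an invariant Baker domain of the prescribed dynamical type, non-univalence of $f$ on it, and strip-confinement of $U$. The first and third pull in opposite directions, since the translation- or dilation-like behaviour of $h$ forces $|f|$ to be large along the strip while $|f|$ must remain small away from it. This is where Arakelyan's theorem, with its flexibility on unbounded closed sets, becomes essential compared to classical Runge approximation. A further delicate point is to show that the classification type of the \emph{ambient} Baker domain~$U$ is determined by the model on $V_1\cup V_2$ and not merely that of its restriction; I would address this by exhibiting an explicit conformal conjugacy between $f$ restricted to a suitable sub-region of~$V_1$ and the half-plane automorphism, and then invoking Theorem~A to conclude that the whole of $U/f$ has the same type.
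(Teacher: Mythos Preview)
Your plan diverges substantially from the paper's argument. The paper does not use Arakelyan approximation at all; instead it obtains $f_1$ directly from the quasiconformal surgery carried out in the proof of Theorem~\ref{thm1}, applied to Herman's lift $h(z)=2\pi i(\alpha+m)+z+e^z$ of a map with a Siegel disk, and obtains $f_2$ by the same surgery applied to $h(z)=2-\log 2+2z-e^z$, a lift of a map with a Jordan superattracting basin. In both cases the key to~\eqref{Xi} is that the underlying Siegel disk, respectively superattracting basin, is bounded by a Jordan curve (for the Siegel case one chooses $\alpha$ of bounded type and invokes Ghys--Herman or Zakeri). A Jordan sub-arc $\gamma$ of this boundary survives as an accessible arc in $\partial W$, and its image $\psi(\gamma)$ under the straightening map is an accessible Jordan arc in $\partial U$; Carath\'eodory's theorem then gives an arc of $\partial\D$ on which the cluster set of the Riemann map is a single finite point, whence $\Xi\neq\partial\D$.

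Your route, by contrast, has a real gap precisely at the step establishing~\eqref{Xi}. Confining $U$ to a horizontal strip does \emph{not} by itself yield an arc of $\partial\D$ on which $\infty$ is absent from the cluster set. Your sentence ``such a strip leaves a definite arc at infinity uncovered on the Riemann sphere'' conflates the extrinsic geometry of $U\subset\overline\C$ with the prime-end structure of~$U$: the impressions of the prime ends of $U$ are subsets of $\partial U$, and nothing prevents every impression from reaching $\infty$ even when $U$ sits in a strip. What is actually needed is an accessible Jordan arc in $\partial U$, and producing one requires control of $\partial U$ that a bare Arakelyan approximation does not provide. A second, smaller gap is in your identification of the type of~$U$: knowing that $f|_{V_1}$ is conjugate to a half-plane translation or dilation tells you the type of $V_1/f$, but to transfer this to $U/f$ you need $V_1$ to be absorbing for $f$ in~$U$ (this is Lemma~\ref{lemma2.5}), and you have not argued that every orbit in $U$ eventually enters~$V_1$.
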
 

\section{Classification of Baker domains}\label{classification}
We describe the classification given by Cowen~\cite{Cowen1981} following
K\"onig~\cite{Koenig1999} and Barg\-mann~\cite{Bargmann};
see also~\cite[Section~5]{Rippon2008} for a discussion of this
classification.
Let $U$ be a domain and let $f:U\to U$ be holomorphic.
We say that a subdomain $V$ of $U$ is \emph{absorbing} for~$f$,
if $V$ is simply connected, 
$f(V)\subset V$
and for any compact subset $K$ of $U$ there exists $n=n(K)$ such
that $f^{n}(K)\subset V$.
(Cowen used the term \emph{fundamental} instead of absorbing.)
Let $\H=\{z\in\C:\re z>0\}$ be the right halfplane.

\begin{defin}\label{def2.1} 
Let $f:U\to U$ be holomorphic.
Then $(V, \varphi, T, \Omega)$ is called an
\emph{eventual  conjugacy} of $f$ in $U$, if the following statements
hold:
\begin{itemize}
\item[(i)] $V$ is  absorbing for $f$;

\item[(ii)] $\varphi:U\rightarrow \Omega\in\{\mathbb{H},\mathbb{C}\}$ is
holomorphic and $\varphi$ is univalent in $V$;

\item[(iii)] $T$ is a M\"obius transformation mapping $\Omega$
onto itself and $\varphi(V)$ is absorbing for $T$;

\item[(iv)] $\varphi(f(z))=T(\varphi(z))$ for $z\in U$.
\end{itemize}
\end{defin}
K\"onig~\cite{Koenig1999} used
the term \emph{conformal} conjugacy.
With the terminology \emph{eventual} conjugacy we have followed
Barg\-mann~\cite{Bargmann}.

If $U$ is the basin of an attracting 
(but not superattracting) fixed point~$\xi$, then 
an eventual conjugacy is given 
 by the solution of
the Schr\"oder-K\oe nigs functional equation
$\varphi(f(z))=\lambda \varphi(z)$,
with $\lambda=f'(\xi)\neq 0$.
Similarly, eventual conjugacies in parabolic domains are
given by the solutions of Abel's functional equation
$\varphi(f(z))=\varphi(z)+1$.

In what follows, we assume that $f$ has no fixed points in~$U$.
Clearly, this is the case for Baker domains~$U$.
The result of Cowen can now be stated as follows.
\begin{lem}\label{lemma2.1} 
Let $U\neq\C$ be a simply connected domain and
$f:U\to U$ a holomorphic function  without fixed point in~$U$. 
Then $f$ has an eventual conjugacy
$(V, \varphi, T, \Omega)$.
Moreover, $T$ and $\Omega$ may be chosen as
exactly one of the following possibilities:
\begin{itemize}
\item[(a)] $\Omega=\H$ and $T(z)=\lambda z$, where $\lambda >1$;
\item[(b)] $\Omega=\H$ and $T(z)=z+i$ or $T(z)=z-i$;
\item[(c)] $\Omega=\C$ and $T(z)=z+1$.
\end{itemize}
\end{lem}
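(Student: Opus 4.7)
The plan is to reduce, via the Riemann mapping theorem, to a holomorphic self-map of the unit disk, locate a distinguished boundary point with the Denjoy--Wolff theorem, and build the eventual conjugacy by an explicit functional limit tailored to the three possible asymptotics at that point.

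First, fix a Riemann map $\psi:\D\to U$ and set $g=\psi^{-1}\circ f\circ\psi$, so that $g$ is a holomorphic self-map of $\D$ without fixed point. The Denjoy--Wolff theorem furnishes $\xi\in\partial\D$ with $g^n\to\xi$ locally uniformly, and Julia's lemma makes every horodisk at $\xi$ $g$-invariant. A M\"obius change of variables sending $\xi\to\infty$ and $\D\to\H$ turns $g$ into $h:\H\to\H$ with $h^n\to\infty$, every half-plane $\{\re z>r\}$ being $h$-invariant. It then suffices to construct an eventual conjugacy $(V,\varphi,T,\Omega)$ for $h$ on $\H$ and transport it back through $\psi$ and the M\"obius map.

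Next, the Julia--Wolff--Carath\'eodory theorem supplies the nontangential angular derivative $\lambda=\angle\lim_{z\to\infty}h(z)/z\in[1,\infty)$, which drives the trichotomy. If $\lambda>1$, I would build $\varphi$ by the Schr\"oder--K\oe nigs method,
\[
\varphi(z)=\lim_{n\to\infty}\lambda^{-n}h^n(z),
\]
proving locally uniform convergence on $\H$ via the nontangential asymptotic $h(z)=\lambda z+o(|z|)$ combined with Schwarz--Pick estimates on the invariant half-planes; then $\varphi\circ h=\lambda\varphi$, giving $T(z)=\lambda z$ and $\Omega=\H$, i.e.\ case (a). If $\lambda=1$, fix $z_0$ deep in some absorbing half-plane $V=\{\re z>r\}$ and use the Abel-type limit
\[
\varphi(z)=\lim_{n\to\infty}\frac{h^n(z)-h^n(z_0)}{h^{n+1}(z_0)-h^n(z_0)},
\]
which, after normalization, solves $\varphi\circ h=\varphi+1$; convergence again rests on Schwarz--Pick contraction inside the absorbing half-planes. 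The dichotomy between (b) and (c) is then detected by the hyperbolic step $s=\lim_{n\to\infty}d_\H\bigl(h^n(z),h^{n+1}(z)\bigr)$: if $s=0$, the image $\varphi(\H)$ sweeps out all of $\C$ and $T(z)=z+1$, giving (c); if $s>0$, the image $\varphi(\H)$ is a half-plane and $T(z)=z\pm i$ after rescaling, giving (b), with the sign reflecting the tangential direction along which orbits approach $\partial\H$.

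Finally, the absorbing set $V$ is obtained by pulling back through $\varphi$ a canonical region in $\Omega$ — a sector in case (a), a horizontal strip or horoball in the parabolic cases — and taking forward iterates to guarantee the absorption property; univalence of $\varphi$ on $V$ is forced by the univalence of $T$ on $\varphi(V)$ via the functional equation. Mutual exclusivity of the three cases follows because the quotients $\Omega/T$ are conformally the three pairwise non-isomorphic cylinders appearing in Theorem~A. The main obstacle, in my view, is the parabolic case $\lambda=1$: proving that the Abel limit actually exists on all of $\H$ (not merely on $V$) and characterizing when $\varphi(V)$ lies inside $\H$ versus spreading across $\C$ both require delicate Julia--Carath\'eodory-type asymptotics for $h(z)-z$ inside narrowing cusps at $\infty$.
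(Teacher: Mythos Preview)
The paper does not prove this lemma: it is stated as Cowen's theorem and simply cited from~\cite{Cowen1981}. Your outline is therefore not competing with an argument in the paper but sketching a proof of the cited result.

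Your route is the Valiron/Pommerenke/Baker--Pommerenke approach via explicit functional limits, rather than Cowen's original, which proceeds more abstractly by constructing fundamental sets and working directly with the quotient Riemann surface. Both lead to the same trichotomy; the explicit-limit route you describe is closer in spirit to the references~\cite{BakerPommerenke,Pommerenke1979} that the paper itself mentions at the end of Section~\ref{classification} as an alternative. One genuine slip: your claim that ``univalence of $\varphi$ on $V$ is forced by the univalence of $T$ on $\varphi(V)$ via the functional equation'' is backwards---the relation $\varphi\circ h=T\circ\varphi$ with $T$ injective only shows that fibres of $\varphi$ are forward-invariant under~$h$, not that $\varphi$ is injective. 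The correct mechanism is Hurwitz: on a suitable invariant half-plane the maps $\lambda^{-n}h^n$ (resp.\ the normalized Abel quotients) are univalent because $h$ is, and their locally uniform limit is therefore either constant or univalent; ruling out constancy then gives the univalence on~$V$. You are right that the parabolic case $\lambda=1$ carries the real work; Pommerenke~\cite{Pommerenke1979} and Baker--Pommerenke~\cite{BakerPommerenke} contain precisely the estimates needed to establish convergence of the Abel limit and to separate the simply parabolic from the doubly parabolic case via the hyperbolic step.
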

It turns out 
(cf.~\cite{FagellaHenriksen})
that the cases listed in Lemma~\ref{lemma2.1}
correspond precisely to the cases of Theorem A. Thus 
$U$ is hyperbolic in case~(a), simply parabolic in case~(b) 
and doubly parabolic in case~(c).

K\"onig~\cite[Theorem 3]{Koenig1999} has given the 
following useful characterization of the different cases.
\begin{lem}\label{lemma2.2}
Let $U\neq\C$ be an unbounded simply connected domain and
$f:U\to U$ a holomorphic function such that $f^n |_U\to  \infty$
as $n\to\infty$. 
For $w_0\in U$ put
$$
w_n=f^n(w_0) 
\quad\text{and}\quad
d_n=\dist(w_n,\partial U).
$$
Then
\begin{itemize}
\item[(a)] $U$ is hyperbolic if
there exists $\beta>0$ such that 
$|w_{n+1}-w_n|/d_n\geq \beta$ for all $w_0\in U$ and all $n\geq 0$;
\item[(b)] $U$ is simply parabolic if 
$ \liminf_{n\to\infty}|w_{n+1}-w_n|/d_n>0$
for all $w_0\in U$, but
$$\inf_{w_0\in U} \limsup_{n\to\infty}\frac{|w_{n+1}-w_n|}{d_n}=0 ;$$
\item[(c)]  $U$ is doubly parabolic if
$ \lim_{n\to\infty}|w_{n+1}-w_n|/d_n=0$
for all $w_0\in U$.
\end{itemize}
\end{lem}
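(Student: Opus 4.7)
I would argue by contrapositive: assuming $U$ is of a given type, derive the corresponding condition on $h_n:=|w_{n+1}-w_n|/d_n$. Since the three types are exhaustive and the three conditions on $h_n$ are pairwise incompatible, this establishes the lemma.

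The engine of the argument is the eventual conjugacy $(V,\varphi,T,\Omega)$ supplied by Lemma~\ref{lemma2.1}, interpreted through the hyperbolic metric. The first step is to pass from $h_n$ to $\rho_U(w_n,w_{n+1})$. Koebe's one-quarter theorem gives $\rho_U(w)\asymp 1/\dist(w,\partial U)$ on the simply connected $U$, and the cleanest bookkeeping proceeds via the quasi-hyperbolic distance $k_U(u,v):=\inf_\gamma\int_\gamma |dz|/\dist(z,\partial U)$; one uses $k_U\asymp\rho_U$ together with the standard estimate $k_U(u,v)\asymp\log\bigl(1+|u-v|/\min\{\dist(u,\partial U),\dist(v,\partial U)\}\bigr)$ to show that $h_n$ and $\rho_U(w_n,w_{n+1})$ have matching small/bounded-below behaviour, in particular matching $\liminf$/$\limsup$ dichotomies. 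Next, $\varphi\circ f=T\circ\varphi$ on $U$, so $\varphi(w_n)=T^n\varphi(w_0)$; and since $V$ is absorbing and $\varphi|_V$ is a conformal isomorphism onto $\varphi(V)\subset\Omega$, Schwarz--Pick and conformal invariance give, for $n$ large enough that $w_n\in V$,
\[
 \rho_\Omega\bigl(\varphi(w_n),\varphi(w_{n+1})\bigr)\;\le\;\rho_U(w_n,w_{n+1})\;\le\;\rho_{\varphi(V)}\bigl(\varphi(w_n),\varphi(w_{n+1})\bigr),
\]
the lower bound being vacuous in case~(c) where $\Omega=\C$ has no hyperbolic metric.

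Each case is then a direct computation. In cases~(a) and~(b), $T$ is a M\"obius isometry of $\H$, so $\rho_\H(\varphi(w_n),\varphi(w_{n+1}))=\rho_\H(\varphi(w_0),T\varphi(w_0))$ is independent of $n$. In case~(a), $\rho_\H(z,\lambda z)\ge\tfrac12\log\lambda$ uniformly for $z\in\H$, and the Schwarz--Pick lower bound yields $h_n\ge\beta$ uniformly in $w_0\in U$ and $n\ge 0$. In case~(b), $\rho_\H(z,z\pm i)$ is a strictly positive function of $\re z$ alone that tends to $0$ as $\re z\to\infty$; the lower bound therefore gives $\liminf h_n>0$ for every fixed $w_0$, while absorbingness implies $\dist(\varphi(w_n),\partial\varphi(V))$ is comparable to $\re\varphi(w_0)$ for $n$ large, so the upper bound makes $\limsup h_n$ arbitrarily small for $w_0$ with $\re\varphi(w_0)$ large, giving $\inf_{w_0}\limsup h_n=0$. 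In case~(c), absorbingness for $T(z)=z+1$ on $\C$ forces $\dist(\varphi(w_n),\partial\varphi(V))\to\infty$, since any compact disk around $\varphi(w_0)$ has its translates in $\varphi(V)$ eventually; Koebe applied inside $\varphi(V)$ then gives $\rho_{\varphi(V)}(\varphi(w_n),\varphi(w_{n+1}))\to 0$, so $h_n\to 0$.

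The main technical obstacle is the first step, converting statements about $\rho_U(w_n,w_{n+1})$ into statements about $h_n$ while preserving the fine structure---uniformity in $w_0$ in case~(a), and the distinction between $\liminf$ and $\limsup$ in case~(b). A secondary subtlety is the tightness of the sandwich in cases~(b) and~(c): one must exploit the full force of the absorbing property, that is, the fact that $V$ is fundamental rather than merely forward-invariant, to guarantee that $\rho_{\varphi(V)}$ is close enough to $\rho_\Omega$ along the orbit for the upper bound to match the lower bound in the relevant limit.
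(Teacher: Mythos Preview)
The paper does not prove this lemma; it is quoted as K\"onig's result \cite[Theorem~3]{Koenig1999} and no argument is given. Your outline is essentially the standard route, and indeed the paper signals as much: it follows Lemma~\ref{lemma2.2} immediately with the hyperbolic-metric version (Lemma~\ref{lemma2.3}) and remarks that K\"onig \cite[Lemma~3]{Koenig1999} shows the two classifications coincide. So your strategy---reduce $|w_{n+1}-w_n|/d_n$ to $\rho_U(w_n,w_{n+1})$ via Koebe, then read off the behaviour of $\rho_U(w_n,w_{n+1})$ from the model $(T,\Omega)$ using the eventual conjugacy---is exactly the intended one.

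Two small points worth tightening. First, the lower sandwich inequality $\rho_\Omega(\varphi(w_n),\varphi(w_{n+1}))\le\rho_U(w_n,w_{n+1})$ holds for \emph{all} $n\ge 0$, not just for $n$ large, since $\varphi:U\to\Omega$ is globally holomorphic; this is what gives you the uniformity over $w_0$ and $n$ in case~(a) without further work. Second, in case~(b) your sentence ``absorbingness implies $\dist(\varphi(w_n),\partial\varphi(V))$ is comparable to $\re\varphi(w_0)$ for $n$ large'' deserves one more line: since $T(z)=z\pm i$ preserves real parts, $\re\varphi(w_n)=\re\varphi(w_0)$, and absorbingness of $\varphi(V)$ in $\H$ means any closed disk $\overline{D(\varphi(w_0),r)}$ with $r<\re\varphi(w_0)$ has its $T$-iterates eventually inside $\varphi(V)$, so $\dist(\varphi(w_n),\partial\varphi(V))\to\re\varphi(w_0)$. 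With that, the upper bound really does force $\limsup_n\rho_U(w_n,w_{n+1})$ to be small when $\re\varphi(w_0)$ is large.
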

Denote by $\rho_U(\cdot,\cdot)$ the hyperbolic metric in
a hyperbolic domain $U$,
using the normalization where the density $\lambda_\D$ 
in the unit disk is given by $\lambda_\D(z)=2/(1-|z|^2)$.
Considering the hyperbolic metric
instead of the Euclidean metric in Lemma~\ref{lemma2.2}
leads to the following result; see~\cite[Lemma~2.6]{Bargmann}
or~\cite[Theorem 2.2.11]{Zheng1}.
\begin{lem}\label{lemma2.3}
Let $U\neq\C$ be a simply connected domain and
$f:U\to U$ a holomorphic function  without fixed point in~$U$. 
For $z\in U$ put 
$$\ell(z)=\lim_{n\to\infty}\rho_U(f^{n+1}(z),f^n(z)).$$
Then
\begin{itemize}
\item[(a)] $U$ is hyperbolic if
$\inf_{z\in U}\ell(z)>0$;
\item[(b)] $U$ is simply parabolic if 
$\ell(z)>0$ for all $z\in U$, but
$\inf_{z\in U}\ell(z)=0$;
\item[(c)]  $U$ is doubly parabolic if
$\ell(z)=0$ for all $z\in U$.
\end{itemize}
\end{lem}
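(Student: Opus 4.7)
The strategy is to compare the hyperbolic metric $\rho_U$ with the Euclidean quantity $|w_{n+1}-w_n|/d_n$ via Koebe distortion and thereby deduce Lemma~\ref{lemma2.3} from Lemma~\ref{lemma2.2}. The existence of $\ell(z)$ itself is immediate from the Schwarz--Pick inequality for $f:U\to U$, which makes $\rho_U(f^{n+1}(z),f^n(z))$ non-increasing in~$n$.

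For the comparison, for any simply connected $U\neq\C$ the Koebe one-quarter theorem gives the standard bound $1/(2\,d(w,\partial U))\le\lambda_U(w)\le 2/d(w,\partial U)$. Integrating along an arc in $U$ from $w_n=f^n(z)$ to $w_{n+1}$ --- the straight segment, which lies in $D(w_n,d_n)\subset U$, when $|w_{n+1}-w_n|<d_n$ --- one obtains, with $r_n:=|w_{n+1}-w_n|/d_n$,
\[
\rho_U(w_{n+1},w_n)\asymp r_n\quad\text{whenever $r_n$ is small,}
\]
while $\rho_U(w_{n+1},w_n)$ is bounded below by a positive universal constant once $r_n$ exceeds a fixed threshold.

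Granted this equivalence, Lemma~\ref{lemma2.3} follows from Lemma~\ref{lemma2.2}. In case~(a), the uniform bound $r_n\ge\beta$ yields a uniform positive lower bound on $\rho_U(w_{n+1},w_n)$, hence $\inf_z\ell(z)>0$. In case~(c), $r_n\to 0$ gives $\rho_U(w_{n+1},w_n)\to 0$, so $\ell(z)=0$. In case~(b), the hypothesis $\liminf_n r_n>0$ yields $r_n\ge\gamma>0$ for all large~$n$, hence $\rho_U(w_{n+1},w_n)\ge c>0$ eventually; since the sequence is non-increasing, this bound actually holds for every~$n$, and $\ell(z)\ge c>0$. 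Conversely, the assumption $\inf_{w_0}\limsup_n r_n=0$ provides, for every $\eps>0$, a starting point $w_0$ with $r_n<\eps$ eventually, hence $\rho_U(w_{n+1},w_n)<C\eps$ eventually and $\ell(w_0)\le C\eps$, giving $\inf_z\ell(z)=0$. The main technical obstacle is making the Koebe comparison quantitatively sharp with constants independent of $n$ and $z$, and handling the regime $r_n\ge 1$ where the straight segment may leave $D(w_n,d_n)$ so that an auxiliary path (for instance, pulling back a segment via the Riemann map $\D\to U$) must be used; once this is done, the hyperbolic classification in Lemma~\ref{lemma2.3} is a direct translation of the Euclidean one in Lemma~\ref{lemma2.2}.
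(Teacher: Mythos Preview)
The paper does not actually prove Lemma~\ref{lemma2.3}; it is quoted with references to Bargmann and Zheng, and the only argument given in the surrounding text is the remark (which you also make) that the Schwarz--Pick Lemma makes $\rho_U(f^{n+1}(z),f^n(z))$ non-increasing so that $\ell(z)$ exists. Your idea of deducing the hyperbolic characterization from K\"onig's Euclidean characterization (Lemma~\ref{lemma2.2}) via the Koebe density bounds is a reasonable and essentially self-contained route, and the metric comparison you sketch is correct.

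There is, however, a genuine gap you should address. Lemma~\ref{lemma2.2} is stated under the additional hypotheses that $U$ is \emph{unbounded} and $f^n|_U\to\infty$, whereas Lemma~\ref{lemma2.3} is stated for an arbitrary simply connected $U\neq\C$ and an arbitrary holomorphic $f:U\to U$ without fixed point. As written, your argument only establishes Lemma~\ref{lemma2.3} under the stronger hypotheses of Lemma~\ref{lemma2.2}. The fix is short but should be stated: both the Cowen classification and the function $\ell(z)$ are invariant under conformal conjugation (the former by construction, the latter because $\rho_U$ is a conformal invariant), so one may conjugate by a Riemann map sending $U$ to~$\H$ and the Denjoy--Wolff point to~$\infty$; then $f^n\to\infty$ by the Denjoy--Wolff theorem, the hypotheses of Lemma~\ref{lemma2.2} are satisfied, and your Koebe comparison applies. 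Without this reduction step the deduction from Lemma~\ref{lemma2.2} is incomplete.
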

Note that the sequence 
$\left( \rho_U(f^{n+1}(z),f^n(z))\right)_{n\in\N}$
is non-increasing by the Schwarz-Pick Lemma (Lemma~\ref{lemmaSP}
below). Thus the
limit defining $\ell(z)$ exists.

Let now $U$ be an invariant Baker domain of the entire function~$f$.
It is not difficult to see that if $f|_U$ is univalent, then
$f(U)=U$.
Also, the Schwarz-Pick Lemma says that if $f|_U$ is 
univalent, then
$$\rho_U(f^{n+1}(z),f^n(z))=\rho_U(f(z),z)$$
for all $n\in\N$ and $z\in U$.
It now follows from 
Lemma~\ref{lemma2.3}, as already mentioned in the introduction,
 that $U$ cannot be doubly parabolic if $f|_U$ is univalent.

\begin{lem}\label{lemma2.5}
Let $f: U\to U$ be as in Lemma~\mbox{\rm\ref{lemma2.1}} and
let
$U_0$ be an absorbing domain for~$f$.
Then $f:U_0\to U_0$ and $f: U\to U$ are
 of the same type according to the
classification given in Theorem~A or Lemma~\mbox{\rm\ref{lemma2.1}}.
\end{lem}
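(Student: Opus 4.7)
The plan is to construct a natural conformal equivalence between the quotient Riemann surfaces $U/f$ and $U_0/f$ produced by Theorem~A. Because the three model cylinders in the classification are pairwise non-equivalent as Riemann surfaces, having $U/f$ and $U_0/f$ biholomorphic forces $f:U\to U$ and $f:U_0\to U_0$ to fall into the same case.

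First I would check that Theorem~A (and hence Lemma~\ref{lemma2.1}) applies to $f:U_0\to U_0$ in its general form, stated just after Theorem~A for simply connected proper subdomains of $\C$ with fixed-point-free self-maps. The domain $U_0$ is simply connected by the definition of an absorbing domain and satisfies $U_0\subset U\neq\C$; the inclusion $f(U_0)\subset U_0$ is built into that definition; and $f|_{U_0}$ has no fixed point because $f|_U$ does not. Hence $U_0/f$ is a Riemann surface conformally equivalent to exactly one of the three cylinders (a), (b), (c) of Theorem~A.

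The inclusion $\iota:U_0\hookrightarrow U$ is $f$-equivariant, so composing with the projection $U\to U/f$ gives a map that is constant on $\sim_{U_0}$-equivalence classes and therefore descends to a holomorphic map $\Phi:U_0/f\to U/f$. I would verify that $\Phi$ is bijective. For \emph{injectivity}, suppose $u,v\in U_0$ satisfy $f^m(u)=f^n(v)$ in $U$; since $f(U_0)\subset U_0$, both sides already lie in $U_0$, so this same equation witnesses $u\sim_{U_0}v$. For \emph{surjectivity}, given $u\in U$, the absorbing property applied to the compact set $\{u\}$ yields some $N$ with $f^N(u)\in U_0$, and then $[u]_U=[f^N(u)]_U=\Phi([f^N(u)]_{U_0})$.

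A bijective holomorphic map between one-dimensional complex manifolds is automatically a biholomorphism, so $\Phi$ is a conformal equivalence. Since the three cylinders in Theorem~A are pairwise non-equivalent as Riemann surfaces, $U/f$ and $U_0/f$ must correspond to the same case of the classification, which is the assertion of Lemma~\ref{lemma2.5}. I do not expect a serious obstacle; the only point requiring care is the initial verification that the hypotheses of Theorem~A survive restriction to $U_0$, and this is immediate from the definition of an absorbing domain together with the fixed-point-free hypothesis on $U$.
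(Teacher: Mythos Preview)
Your argument is correct and takes a genuinely different route from the paper's. The paper works on the level of eventual conjugacies (Lemma~\ref{lemma2.1}): starting from an eventual conjugacy $(V,\varphi,T,\Omega)$ of $f$ in $U$, it invokes a lemma of Cowen (also in Bargmann) guaranteeing that some component $W$ of $V\cap U_0$ is absorbing for $f$ in $U$ with $\varphi(W)$ absorbing for $T$ in $\Omega$; then $(W,\varphi|_W,T,\Omega)$ is simultaneously an eventual conjugacy for $f:U\to U$ and for $f:U_0\to U_0$, so the pair $(T,\Omega)$ --- and hence the type --- is literally the same for both. You instead work on the level of Theorem~A, producing a conformal isomorphism $U_0/f\cong U/f$ directly from the inclusion and the absorbing property, and then appeal to the pairwise non-isomorphism of the three model cylinders. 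Your approach is softer and bypasses the technical lemma about intersecting two absorbing domains; the paper's approach is more hands-on and yields the stronger statement that a \emph{single} eventual conjugacy serves for both maps.

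One point worth making explicit: the word ``therefore'' when you say the map descends to a \emph{holomorphic} $\Phi$ is doing real work, since $\pi_{U_0}:U_0\to U_0/f$ need not be a local biholomorphism everywhere (it ramifies over critical orbits when $f|_{U_0}$ is not univalent). What saves you is that every equivalence class has a representative at which $\pi_{U_0}$ \emph{is} a local biholomorphism --- namely any point far enough along the orbit to lie in an absorbing set on which the linearising coordinate furnished by Lemma~\ref{lemma2.1} is univalent --- and local invertibility at one representative suffices to check holomorphicity of $\Phi$ at the corresponding point of $U_0/f$.
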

\begin{proof}
Let $(V,\varphi,T,\Omega)$ be an
eventual  conjugacy of $f$ in~$U$.
Since,  by the definition
of an absorbing domain,  $V$ and $U_0$ are simply connected, 
the components of  $V\cap U_0$ 
are also simply connected.
It was shown by Cowen~\cite[p.~79-80]{Cowen1981} (see 
also~\cite[Lemma~2.3]{Bargmann}) that there exists a
component $W$ of $V\cap U_0$ which is absorbing for 
$f$ in~$U$. Moreover, $\varphi(W)$ is absorbing for $T$
in~$\Omega$. Thus
$(W,\varphi|_W,T,\Omega)$ is an 
eventual  conjugacy of  both $f:U\to U$
and $f:U_0\to U_0$.
The conclusion follows.
\end{proof}
We note that a somewhat different approach to  classifying holomorphic
self-maps of $\H$, based on the sequences 
$(\rho_\H(f^{n+1}(z),f^n(z)))_{n\in\N}$,
 was developed
by Baker and Pommerenke~\cite{BakerPommerenke,
Pommerenke1979}; see also~\cite{Bonfert}. 
As shown by K\"onig~\cite[Lemma~3]{Koenig1999}, this  leads to
the same classification as above. 

We mention that Baker domains may also be defined for 
functions meromorphic in the plane.
In general, Baker domains of meromorphic
functions are  multiply connected.
K\"onig~\cite[Theorem~1]{Koenig1999}
has shown that if $U$ is a Baker domain of a meromorphic function  
with only
finitely many poles, then an eventual conjugacy in $U$ exists
and the conclusion of Lemma~\ref{lemma2.2} holds.
On the other hand, eventual conjugacies  need not exist for Baker domains
of meromorphic functions with infinitely many 
poles~\cite[Theorem~2]{Koenig1999}.
For further results on Baker domains of meromorphic functions
we refer to~\cite{Zheng1}. In particular, we note 
that if $U$ is a multiply connected Baker domain of a 
meromorphic function $f$ such that there exists an eventual
conjugacy in~$U$, then $U$ contains at
least two singularities of~$f^{-1}$; see~\cite[p.~200]{Zheng1}. 
Thus $f|_U$ is not 
univalent in a multiply connected Baker domain~$U$
with 
an eventual conjugacy.

More generally, one may also consider 
functions meromorphic outside a small set. 
For such functions the classification of Baker domains
is discussed in~\cite[Section~4]{Zheng}.

The classification of Baker domains mentioned above appears
in various other questions related to Baker domains.
Besides the papers already cited 
 we mention~\cite{Bergweiler2001,BergweilerDrasinLangley,
BuffRueckert,Lauber}.

\section{Preliminary lemmas}\label{prelims}
The following result, already used in section~\ref{classification},
 is known as the Schwarz-Pick Lemma; see, e.g., \cite[p.12]{Steinmetz}.
\begin{lem}\label{lemmaSP}
Let $U$ and $V$  be simply connected hyperbolic 
domains and $f:U\to V$ be holomorphic.
Then 
$\rho_V(f(z_1), f(z_2))\leq
\rho_U(z_1, z_2)$ 
for all $z_1,z_2\in U$.
If there exists $z_1,z_2\in U$ with $z_1\neq z_2$ such that
$\rho_V(f(z_1), f(z_2))=\rho_U(z_1, z_2)$, then $f$ is bijective.
\end{lem}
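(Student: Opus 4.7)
The plan is to reduce the statement to the classical Schwarz lemma on the unit disk. By the Riemann Mapping Theorem there are conformal bijections $\psi_U\colon\D\to U$ and $\psi_V\colon\D\to V$. Since the hyperbolic metric is conformally invariant under such maps, one has $\rho_U(\psi_U(a),\psi_U(b))=\rho_\D(a,b)$ and the analogous identity for $V$. Consequently it suffices to prove the lemma for $g=\psi_V^{-1}\circ f\circ\psi_U\colon\D\to\D$ in place of $f$, and bijectivity of $g$ is equivalent to bijectivity of~$f$.

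Next, given $w_1,w_2\in\D$, I would apply the disk automorphisms
$$
A(w)=\frac{w-w_1}{1-\overline{w_1}\,w},\qquad B(w)=\frac{w-g(w_1)}{1-\overline{g(w_1)}\,w},
$$
and set $h=B\circ g\circ A^{-1}$. Then $h\colon\D\to\D$ is holomorphic with $h(0)=0$, so the usual Schwarz lemma gives $|h(w)|\le|w|$ for all $w\in\D$. Taking $w=A(w_2)$ and using that $A$ and $B$ are hyperbolic isometries of $\D$, this inequality rearranges to
$$
\left|\frac{g(w_2)-g(w_1)}{1-\overline{g(w_1)}\,g(w_2)}\right|\le\left|\frac{w_2-w_1}{1-\overline{w_1}\,w_2}\right|.
$$
Applying the increasing function $2\operatorname{arctanh}$ to both sides yields $\rho_\D(g(w_1),g(w_2))\le\rho_\D(w_1,w_2)$, which is the desired inequality.

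For the rigidity part, equality $\rho_V(f(z_1),f(z_2))=\rho_U(z_1,z_2)$ with $z_1\neq z_2$ translates, via the same reduction, to $|h(w_0)|=|w_0|$ at the nonzero point $w_0=A(\psi_U^{-1}(z_2))$. The rigidity clause of the classical Schwarz lemma then forces $h$ to be a rotation, hence a bijection of $\D$; unwinding the Möbius conjugations shows that $g$, and therefore $f=\psi_V\circ g\circ \psi_U^{-1}$, is a bijection. There is no real obstacle here; the only thing to be careful about is that the conformal maps supplied by the Riemann Mapping Theorem are isometries for the hyperbolic metrics, and that the normalization $\lambda_\D(z)=2/(1-|z|^2)$ used in the paper is compatible with the pseudo-hyperbolic distance appearing in the Schwarz lemma.
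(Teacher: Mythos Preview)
Your proof is correct and is the standard argument for the Schwarz--Pick Lemma: reduce to the unit disk via Riemann maps, normalize by disk automorphisms to apply the classical Schwarz lemma, and read off both the contraction and the rigidity. Note, however, that the paper does not actually prove this lemma; it is stated as a well-known result with a reference to \cite[p.~12]{Steinmetz}, so there is no proof in the paper to compare against. Your argument is exactly the one found in standard texts and needs no correction.
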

If $U\subset V$, then we may apply the Schwarz-Pick Lemma to
$f(z)=z$ and obtain $\rho_V(z_1), z_2)\leq
\rho_U(z_1, z_2)$ for all $z_1,z_2\in U$.

The following result is the 
analogue of the Schwarz-Pick Lemma for
quasiconformal mappings~\cite[Section II.3.3]{LehtoVir}.
Note that a different normalization of the hyperbolic
metric is used in~\cite{LehtoVir}.

\begin{lem}\label{lem1.1}  Let 
$U$ and $V$  be simply connected hyperbolic 
domains and $f:U\to V$ be a $K$-quasiconformal mapping.
Then 
\begin{equation}
\label{hp}
\rho_V(f(z_1), f(z_2))\leq  
M_K(\rho_U(z_1, z_2))
\end{equation}
for $z_1,z_2\in U$, with 
\begin{equation}
\label{hp2}
M_K(x)=2\operatorname{arctanh}\left(\varphi_K
\left( \tanh\tfrac12 x\right)\right).
\end{equation}
Here $\varphi_K$ is the Hersch-Pfluger distortion function.
\end{lem}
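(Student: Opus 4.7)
The plan is to reduce the statement to the Hersch--Pfluger distortion theorem for $K$-quasiconformal self-maps of the unit disk, which is precisely what the function $\varphi_K$ was introduced for.

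First I would transfer everything to $\D$. Using the Riemann Mapping Theorem, I choose conformal maps $\psi_U:\D\to U$ and $\psi_V:\D\to V$ and set $g=\psi_V^{-1}\circ f\circ \psi_U$. Since a composition of a $K$-quasiconformal map with conformal maps is again $K$-quasiconformal, $g:\D\to \D$ is $K$-quasiconformal. Moreover, conformal maps are isometries for the hyperbolic metric, so \eqref{hp} for $f$ at the points $z_1,z_2$ is equivalent to the analogous inequality for $g$ at the points $\zeta_j=\psi_U^{-1}(z_j)$. It thus suffices to prove the statement for a $K$-quasiconformal self-map of $\D$.

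Next I would use the Möbius invariance of the hyperbolic metric to normalize. Let $\sigma$ be a Möbius automorphism of $\D$ with $\sigma(\zeta_1)=0$, and let $\tau$ be a Möbius automorphism of $\D$ with $\tau(g(\zeta_1))=0$. Then $h=\tau\circ g\circ \sigma^{-1}$ is a $K$-quasiconformal self-map of $\D$ with $h(0)=0$. Setting $w=\sigma(\zeta_2)$, one has
\[
|w|=\tanh\!\tfrac12\rho_\D(\zeta_1,\zeta_2)
\qquad\text{and}\qquad
|h(w)|=\tanh\!\tfrac12\rho_\D(g(\zeta_1),g(\zeta_2)),
\]
using that $\rho_\D(0,z)=2\operatorname{arctanh}|z|$ in our normalization.

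Now I would invoke the Hersch--Pfluger distortion theorem (see \cite[Section~II.3.3]{LehtoVir}), which states exactly that any $K$-quasiconformal self-map $h$ of $\D$ fixing the origin satisfies $|h(w)|\le \varphi_K(|w|)$ for all $w\in\D$. Substituting the displayed identities gives
\[
\tanh\!\tfrac12\rho_\D(g(\zeta_1),g(\zeta_2))
\le \varphi_K\!\left(\tanh\!\tfrac12\rho_\D(\zeta_1,\zeta_2)\right),
\]
and applying $2\operatorname{arctanh}$ to both sides (it is increasing) produces the desired bound \eqref{hp} with $M_K$ as in \eqref{hp2}. There is no real obstacle beyond careful bookkeeping: the inequality is a direct transcription of Hersch--Pfluger via the Riemann map and Möbius normalization, and the only nontrivial ingredient is that distortion theorem itself, which is taken as known.
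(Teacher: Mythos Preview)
Your argument is correct: the reduction via Riemann maps to a $K$-quasiconformal self-map of $\D$, the M\"obius normalization sending $\zeta_1$ and $g(\zeta_1)$ to $0$, and the application of the Hersch--Pfluger inequality $|h(w)|\le\varphi_K(|w|)$ together with $\rho_\D(0,z)=2\arctanh|z|$ give exactly~\eqref{hp} with $M_K$ as in~\eqref{hp2}. The paper does not supply its own proof of this lemma; it simply quotes the result from \cite[Section~II.3.3]{LehtoVir} (with the remark that Lehto--Virtanen use a different normalization of the hyperbolic metric), so your write-up is in fact the standard derivation behind that citation rather than an alternative route.
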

The function $\arctanh\circ\; \varphi_K\circ \tanh$
appearing on the right hand side of~\eqref{hp2} has been studied in detail
in a number of papers. For example, it was shown 
in~\cite[Theorem~1.6]{QVV} that
this function is strictly increasing and concave.
Various estimates of this function in terms of elementary functions
are known. We only mention~\cite[Theorem~11.2]{Vuorinen88} where
it was shown
that the conclusion of Lemma~\ref{lem1.1} holds with~\eqref{hp} replaced 
by $\rho_V(f(z_1), f(z_2))\leq K\left(\rho_U(z_1, z_2)+\log 4\right)$.
We do not need any explicit estimate for $M_K$, but 
the fact that~\eqref{hp} holds for some 
non-decreasing function $M_K:[0,\infty)\to [0,\infty)$
suffices.

The following result~\cite[Lemma~1]{Shishikura1987} is the fundamental
lemma for quasiconformal surgery.
Here $\overline{\C}=\C\cup\{\infty\}$ denotes the Riemann sphere.
\begin{lem}\label{lem1.3}
Let $g:\overline{\C}\to\overline{\C}$ be a quasiregular mapping.
Suppose that there are disjoint open subsets $E_1,\dots,E_m$ of $\overline{\C}$,
quasiconformal mappings $\Phi_i:E_i\to E_i'\subset \overline{\C}$,
for $i=1,\dots,m$, and an integer $N\geq 0$ satisfying the following
conditions:
\begin{itemize}
\item[(i)] $g(E)\subset E$ where $E=E_1\cup\dots\cup E_m$;
\item[(ii)] $\Phi\circ g\circ \Phi_i^{-1}$ is analytic in
$E_i'=\Phi_i(E_i)$, where $\Phi:E\to\overline{\C}$ is defined
by $\Phi|_{E_i}=\Phi_i$;
\item[(iii)]  
$g_{\overline{z}}=0$ a.e.\ on $\overline{\C}\setminus g^{-N}(E)$.
\end{itemize}
Then there exists a quasiconformal mapping $\psi:\overline{\C}\to\overline{\C}$
such that $\psi\circ g\circ \psi^{-1}$ is 
a rational function. Moreover, $\psi\circ \Phi_i^{-1}$ is conformal
in $E_i'$ and $\psi_{\overline{z}}=0$ a.e.\ on 
$\overline{\C}\setminus \bigcup_{n\geq 0} g^{-n}(E)$.
\end{lem}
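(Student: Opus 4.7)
The plan is to apply the measurable Riemann mapping theorem after constructing a $g$-invariant Beltrami coefficient $\mu$ on $\overline{\C}$ with $\|\mu\|_\infty<1$. I build $\mu$ in three layers. First, on $E=E_1\cup\dots\cup E_m$ set $\mu=\Phi^*\mu_0$, where $\mu_0$ is the standard complex structure; explicitly, $\mu|_{E_i}$ is the Beltrami coefficient of $\Phi_i$. Condition~(ii) gives $\Phi\circ g=h_i\circ\Phi_i$ on $E_i$ for some holomorphic $h_i$, so a direct computation yields $g^*(\Phi^*\mu_0)=\Phi^*\mu_0$ on $E$; thus $\mu|_E$ is $g$-invariant. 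Second, extend $\mu$ to the grand orbit $\mathcal{O}=\bigcup_{n\geq 0}g^{-n}(E)$ by pullback: for $z\in g^{-n}(E)\setminus g^{-(n-1)}(E)$, set $\mu(z)=((g^n)^*\mu)(z)$. Third, set $\mu=0$ on $\overline{\C}\setminus\mathcal{O}$. By construction $g^*\mu=\mu$ almost everywhere.

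The main obstacle is the uniform bound $\|\mu\|_\infty<1$, which would fail for an arbitrary quasiregular $g$ because iterated pullback can amplify dilatation without control. Here it depends decisively on hypothesis~(iii): if $z\in g^{-n}(E)\setminus g^{-(n-1)}(E)$ and $j<n-N$, then $g^j(z)\notin g^{-N}(E)$, so $g$ is holomorphic at $g^j(z)$. Thus in the pullback chain $z\mapsto g(z)\mapsto\dots\mapsto g^n(z)\in E$, the first $n-N$ applications of $g$ are conformal and preserve the essential supremum of the Beltrami coefficient, while only the last $N$ steps contribute dilatation. Using the multiplicativity of maximal dilatation under composition one obtains
\[
\|\mu\|_\infty\leq \frac{K_\Phi K_g^{N}-1}{K_\Phi K_g^{N}+1}<1,
\]
where $K_\Phi=\max_i K(\Phi_i)$ and $K_g=K(g)$. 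This bookkeeping of at most $N$ non-conformal pullbacks is the technical heart of the proof, and would break down without assumption~(iii).

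With $\mu$ in hand, the measurable Riemann mapping theorem furnishes a quasiconformal homeomorphism $\psi:\overline{\C}\to\overline{\C}$ with $\psi^*\mu_0=\mu$. Setting $R=\psi\circ g\circ\psi^{-1}$, the invariance $g^*\mu=\mu$ combined with functoriality of pullback gives
\[
R^*\mu_0=(\psi^{-1})^*g^*\psi^*\mu_0=(\psi^{-1})^*g^*\mu=(\psi^{-1})^*\mu=\mu_0,
\]
so $R$ is a quasiregular self-map of $\overline{\C}$ whose Beltrami coefficient vanishes almost everywhere; by Weyl's lemma (and removability of isolated branch singularities of bounded quasiregular maps) $R$ is holomorphic on all of $\overline{\C}$, hence a rational function. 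On $E_i'$ the map $\psi\circ\Phi_i^{-1}$ pulls $\mu_0$ back to $\mu_0$, since $\mu|_{E_i}=\Phi_i^*\mu_0$, so it is conformal there; and $\mu\equiv 0$ on $\overline{\C}\setminus\mathcal{O}$ yields $\psi_{\overline{z}}=0$ a.e.\ on this set, as required.
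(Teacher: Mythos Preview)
Your proof is correct and is precisely the standard argument due to Shishikura: build a $g$-invariant Beltrami coefficient by pulling back the complex structures of the $\Phi_i$ along $g$-orbits, use hypothesis~(iii) to cap the number of dilatation-increasing pullbacks at~$N$ so that $\|\mu\|_\infty<1$, and then straighten via the measurable Riemann mapping theorem. The paper itself does not prove this lemma at all; it simply quotes it from \cite[Lemma~1]{Shishikura1987} (noting also the entire-function version in \cite[Theorem~3.1]{KisakaShi}), so there is no alternative approach to compare against---you have supplied exactly the proof the references contain.
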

Shishikura stated the 
result in~\cite{Shishikura1987} 
for rational functions, but
it holds for entire functions as well. A stronger result,
stated for entire functions,
can be found in \cite[Theorem~3.1]{KisakaShi}.

\section{Proof of Theorem \ref{thm1}}\label{proof1}
Put
$g(z)=e^{2\pi i\alpha}ze^z$, where $\alpha\in[0,1]\setminus\Q$ is 
chosen such that $g$ has a Siegel disk $S$ at $0$,
and put
$$h(z)=2\pi i(\alpha +m)+z+e^z$$
where $ m\in\mathbb{Z}$.
Using $g(e^z)=\exp h(z)$ it can be shown that
$h$ has a Baker domain $V=\log S$
on which $h$ is univalent.
This example, with $m=0$, is due to Herman~\cite[p.\ 609]{Herman1985};
see also~\cite{BakerWeinreich,Bergweiler1995,Bergweiler1995a}.
 We shall assume, however, that
$m\geq  3$. It is not difficult to see that $V$ is simply parabolic.

There exists $r_0\in (0,1)$ and a $g$-invariant domain $S_0\subset S$
such that 
$$D(0,r_0)\subset S_0\subset D(0,1).$$
 Here and in the 
following $D(a,r)$ denotes the open disk of radius $r$ around a
point~$a$.
With $x_0=\log r_0$ we 
thus see that $V$ contains 
$H_0=\{z\in\mathbb{C}:\re <x_0\}$.
Moreover, if $z\in H_0$, then 
\begin{equation}\label{Imhn}
\im h^n(z)\geq 2\pi(\alpha+m)+\im h^{n-1}(z)-1> \im z+2n\pi
\quad\text{and}\quad
\re h^n (z)<0
\end{equation}
for $n\in\N$.

For $x_1< x_0-\pi$ we define
$$S(x_1)=\{z\in\mathbb{C}: \re z<x_1, \;|\im z|< 2\pi\}\cup D(x_1,2\pi)$$
and
$$T(x_1)=\{z\in\mathbb{C}: \re z<x_1, \;|\im z|< \pi\}\cup D(x_1,\pi)$$
We also put
$$k(z)=2\pi i(\alpha+m)+z+\exp(e^{-z}-L),$$
for a large constant $L$ to be determined later.

Now we define $F:\mathbb{C}\rightarrow\mathbb{C}$ as follows.
For $z\in\mathbb{C}\setminus S(x_1)$ we put $F(z)=h(z)$, for $z\in
\overline{T(x_1)}$  we put $F(z)=k(z)$ and 
for $z\in S(x_1)\setminus \overline{T(x_1)}$ we define $F(z)$ by interpolation.
Thus for $x\leq x_1$ and $\pi\leq y\leq 2\pi$ we put
$$F(x+iy)=\frac{y-\pi}{\pi}h(x+2\pi i)+\frac{2\pi-y}{\pi}k(x+\pi i),$$
for $x\leq x_1$ and $-2\pi\leq y\leq -\pi$ we put
$$F(x+iy)=\frac{-y-\pi}{\pi}h(x-2\pi i)+\frac{2\pi+y}{\pi}k(x-\pi i),$$
and for $\pi\leq r\leq 2\pi$ and $-\pi/2\leq \varphi\leq \pi/2$
we put
$$F(x_1+re^{i\varphi})= \frac{r-\pi}{\pi}h(x_1+2\pi e^{i\varphi})
+\frac{2\pi-r}{\pi}k(x_1+\pi e^{i\varphi}).$$
We claim that $F$ is quasiregular.
By definition, $F$ is holomorphic in $T(x_1)$ and in $\C\setminus\overline{S(x_1)}$.
So it suffices to consider the dilatation in $S(x_1)\setminus \overline{T(x_1)}$.
We first consider the subregion
$A
=\{x+iy: x\leq x_1, \;\pi<y<2\pi\}$ of $S(x_1)\setminus \overline{T(x_1)}$.
For $z=x+iy\in A$ we have 
\begin{equation}\label{FinOmega1}
F(z)=2\pi i(\alpha+m)+z + P(z)
\end{equation}
with 
\begin{equation}\label{RinOmega1}
P(x+iy)=\frac{y-\pi}{\pi}e^x+\frac{2\pi-y}{\pi}\exp(-e^{-x}-L)).
\end{equation}
It is easy to see that if $|x_1|$ is large enough, then
$|P_x(z)|\leq 1/4$ and 
$|P_y(z)|\leq 1/4$ for $z\in A$. Thus
$|P_z(z)|\leq 1/4$ and 
$|P_{\overline{z}}(z)|\leq 1/4$ and hence 
$|F_z(z)|\geq 3/4$ and $|F_{\overline{z}}(z)|\leq 1/4$ for $z\in A$.
It follows that $F$ is quasiregular in $ A$.
The argument for the domain
$\overline{A}=\{x+iy: x\leq x_1, \; -2\pi<y<-\pi\}$ is analogous.

Now we consider the region
$B=
\{x_1+re^{i\varphi}: \pi\leq r\leq 2\pi,\;  -\pi/2\leq \varphi\leq \pi/2\}$.
For $z=x_1+re^{i\varphi}\in B$ we have 
\begin{equation}\label{FinOmega2}
F(z)=2\pi i(\alpha+m)+z + Q(z)
\end{equation}
with 
\begin{equation}\label{RinOmega2}
\begin{aligned}
Q(x_1+re^{i\varphi})
=&\ \frac{r-\pi}{\pi}\exp\left(x_1+2\pi e^{i\varphi}\right)\\
&\ +\frac{2\pi-r}{\pi}\exp\left(-\exp\left(x_1+\pi e^{i\varphi}\right)-L\right).
\end{aligned}
\end{equation}
The computation of the partial derivatives of $Q$ is more cumbersome
than for $P$, but again it follows 
$|Q_z(z)|\leq 1/4$ 
and $|Q_{\overline{z}}(z)|\leq 1/4$ for $z\in B$
if $|x_1|$ and $L=L(x_1)$ are large enough. 
As before this implies that
$F$ is quasiregular in $ B$.

It follows from~\eqref{FinOmega1}, \eqref{RinOmega1},
\eqref{FinOmega2} and~\eqref{RinOmega2}, together with
the corresponding representation in $\overline{A}$, that
\begin{equation}\label{2.1}
\im F(z)>\im z+2\pi(\alpha+m)-1 >3\pi
\quad\text{and}\quad \re F(z)<x_0
\end{equation}
for $z\in \overline{S(x_1)}\setminus T(x_1)$,
provided $|x_1|$ and $L$ are large enough.
Together with~\eqref{Imhn} this implies that  
if $x_1$ and $L$ are suitably chosen, then
every orbit passes through $\overline{S(x_1)}\setminus  T(x_1)$,
which is  the set where $F$ is not holomorphic, at most once. 

It now follows from Lemma \ref{lem1.3},
applied with $g=F$, $N=1$, $m=1$, 
$$E_1=\bigcup_{n=1}^\infty F^n
\left(S(x_1)\setminus\overline{T(x_1)}\right)$$
 and
 $\Phi_1=\text{id}_{E_1}$,
 that there exists a quasiconformal map
$\psi:\C\to\C$ such that $f=\psi\circ F\circ\psi^{-1}$ is an
entire function.

It is easy to see that $f$ is transcendental.
For example, let $a\in\mathcal{J}(h)$
such that $h^{-1}(a)$ is infinite. The
complete invariance of $\JJ(h)$  yields that
$h^{-1}(a)\cap S(x_1)=\emptyset$ so that $h^{-1}(a)\subset
F^{-1}(a)=\psi^{-1}(f^{-1}(\psi(a)))$. Therefore $f^{-1}(\psi(a))$ is
infinite, which implies that $f$ is transcendental.

In the sequel, we will apply the concepts of the Fatou-Julia
theory also to the quasiregular function~$F$. For example, we can define
$\JJ(F)$ as the set where the iterates of $F$ are not normal 
and find that $\JJ(f)=\psi(\JJ(F))$.

It follows from~\eqref{Imhn} and~\eqref{2.1} that
$$\im F^n(z)\rightarrow\infty\quad\text{for}\ 
z\in V\setminus\bigcup_{k=0}^\infty h^{-k}(T(x_1))$$
as $n\rightarrow\infty$.
With $W_0=\{z\in\C: \re z< x_0,\;  \im z> 2 \pi\}$ we have
$$V\setminus\bigcup_{k=0}^\infty h^{-k}(T(x_1))\supset 
W_0$$
and thus find that $F$ has a Baker domain $W$ containing
$W_0$.
Using~\eqref{2.1} we see that $\overline{S(x_1)}\setminus T(x_1)
\subset W$.

We now show that $T(x_1)\cap \JJ(F)\neq\emptyset$.
In order to do so we note that if $x\in\R$ is sufficiently
large, then
$$
\left| F\left(-x+i\frac{\pi}{2}\right)\right|
=\left| k\left(-x+i\frac{\pi}{2}\right)\right|
=\left|2\pi i(\alpha+m)-x+i\frac{\pi}{2}
+\exp\left(i e^x-L)\right)\right|
\leq 2x
$$
while 
$$
\left| F'\left(-x+i\frac{\pi}{2}\right)\right|
=\left| k'\left(-x+i\frac{\pi}{2}\right)\right|
=\left| 1-ie^x \exp\left(i e^x-L)\right)\right|
\geq e^{x-L}-1
\geq e^{x/2}.
$$
Given $a_1,a_2\in \JJ(F)$ 
it now follows from Landau's theorem that  if $x$ is large 
enough, then there exists
$j\in\{1,2\}$ and
$z\in D(-x+i\pi/2,1)$ such that $F(z)=a_j$.
By the complete invariance of $\JJ(F)$ we thus have
$D(-x+i\pi/2,1)\cap \JJ(F)\neq\emptyset$. In particular,
$T(x_1)\cap \JJ(F)\neq\emptyset$.

Since $F$ has the unbounded Fatou component~$W$, 
a result of Baker~\cite{Baker1975} yields that $F$ has
no multiply connected Fatou components.
Since $\overline{S(x_1)}\setminus T(x_1)\subset W\subset\FF(F)$,
this implies that $T(x_1)$ contains an unbounded
component $\Gamma$ of $\JJ(F)$.

For large $x\in\R$ we consider $w_1=-x-2\pi i,\ w_2=F(w_1)$ and
$w_3=F(w_2)$. Obviously, $w_j\in W$ for $j=1,2,3$. We note
that $w_1$ is ``below" the strip $T(x_1)$ while
$w_2$ and $w_3$ are ``above" this strip by~\eqref{2.1}.
In fact, we have 
$$w_2=2\pi i(\alpha+m-1)-x+e^{w_1}\in W_0$$ 
and 
$$w_3=2\pi i(\alpha+m)+w_2+e^{w_2}=2\pi i(2\alpha+2m-1)-x+e^{w_1}
+e^{w_2}\in W_0.$$
We choose $\delta>0$ such that 
$r_1=2\pi(\alpha+m)+\delta<r_2=2\pi(2\alpha+2m-3)-\delta$
and find that
$$w_2\in D(w_3,r_1)\subset 
 D(w_3,r_2 )$$
for sufficiently large~$x$.
This implies that
$$\rho_{D(w_3,r_2 )}(w_2,w_3)= \rho_{\D}((w_2-w_3)/r_2,0)
\leq 2\arctanh (r_1/r_2).$$
As $D(w_3,r_2 )\subset W_0\subset W$,
the Schwarz-Pick Lemma now yields that
\begin{equation}\label{w2w3}
\rho_W(w_2,w_3)\leq
\rho_{W_0}(w_2,w_3)\leq 
\rho_{D(w_3,r_2 )}(w_2,w_3)\leq 2\arctanh (r_1/r_2)
\end{equation}
for large~$x$.

Next we show that 
\begin{equation}\label{w1w2}
\rho_W(w_1,w_2)
\rightarrow \infty\quad\text{as } x\rightarrow \infty.
\end{equation}
In order to do so, we note that the preimage of $\Gamma$ under $F$ 
contains an unbounded continuum $\Gamma'$ which is contained in 
$\{z\in\C: \re z< x_1,\; y_1<\im z< y_2\}$ for suitable 
$y_1,y_2$ satisfying $y_1<y_2<-2\pi$.
Let now 
$\gamma$ be a curve connecting $w_1$ and $w_2$ in~$W$.
It follows that there exists $x_2\leq x_1$ such that if
$t\leq x_2$, then there  exists $z\in\gamma$,
$\zeta\in\Gamma$ and $\zeta'\in\Gamma'$ such that 
$\re \zeta=\re \zeta'=\re z=t$ and
$y_1<\im \zeta'<\im z< \im \zeta<\pi$.
This implies that the  density  $\lambda_W$ of the hyperbolic metric
in $W$ satisfies
$$\lambda_W(z)\geq \frac{1}{2\dist(z,\partial W)}\geq
\frac{1}{2\min \{|z-\zeta|,|z-\zeta'|\}}
\geq \frac{1}{\pi-y_1}.$$ 
From this we can deduce that if $x<x_2$, then
$$\int_\gamma \lambda_W(z)|dz| \geq \frac{|x-x_2|}{\pi-y_1}.$$
As this holds for all curves $\gamma$ connecting $w_1$ and $w_2$,
we obtain
$$
\rho_W(w_1,w_2)\geq \frac{|x-x_2|}{\pi-y_1},$$
from which~\eqref{w1w2} follows.

Put $U=\psi(W)$. 
Since $f=\psi\circ F\circ\psi^{-1}$  we find that
$U$ is a Baker domain of~$f$. Let $v_j=\psi(w_j)$,
for $j=1,2,3$, and
denote by $K$ the dilatation of $\psi$.
It follows from Lemma~\ref{lem1.1} and~\eqref{w2w3} that
$$\rho_{{U}}(v_2,v_3)\leq M_K(2\arctanh (r_1/r_2)).$$

Suppose now that $f:{U}\rightarrow {U}$ is univalent.
The Schwarz-Pick Lemma yields that
$\rho_{{U}}(v_2,v_3)=\rho_{{U}}(f(v_1),f(v_2))=\rho_{{U}}(v_1,v_2)$.
Noting that $\psi^{-1}$ is also $K$-quasicon\-for\-mal,
we deduce from Lemma~\ref{lem1.1} that
$$\rho_W(w_1,w_2)\leq M_K(\rho_{{U}}(v_1,v_2))
=M_K(\rho_{{U}}(v_2,v_3)).$$
Combining the last  two estimates we obtain
$$\rho_W(w_1,w_2)\leq M_K(M_K(2\arctanh (r_1/r_2))),$$
which contradicts~\eqref{w1w2}. 
Thus $f:{U}\rightarrow {U}$ is not univalent.

It remains to show that $U$ is a simply parabolic Baker domain.
In order to do so we recall that 
$V$ is a simply parabolic Baker domain of~$h$. We also note that it 
follows from the construction
of $V$ that there exists an absorbing domain $V_0$ of $h$ in $V$
satisfying $V_0 \subset \{z\in\mathbb{C}:\re z>2\pi\}$.
Clearly, $V_0$ is
also  an absorbing domain of $F$ in~$W$.
Hence
$U_0=\psi(V_0)$ is an absorbing domain of $f$ in~$U$. 

Since $\psi$ is analytic in $V_0$, we find 
for $v\in V$ and $w=\psi(v)\in \psi(V)=U$ and large $n$ that
$$\rho_{U_0}(f^{n+1}(w),f^{n}(w))
=\rho_{\psi(V_0)}(\psi(h^{n}(v)),\psi(h^{n+1}(v)))
=\rho_{V_0}(h^{n+1}(v),h^{n}(v)).$$
Since $V$ is a simply parabolic Baker domain of $h$, 
Lemma~\ref{lemma2.5} now yields that $U$ is simply parabolic.
This completes the proof of Theorem~\ref{thm1}.

\section{Proof of Theorem \ref{thm2}}\label{proof2}
Let $\alpha$, $g$, $h$, $\psi$ and $f$ be as in the proof of Theorem \ref{thm1}.
Baker and Weinreich~\cite[Theorem~3]{BakerWeinreich} used 
results of Ghys~\cite{Ghys} and Herman~\cite{Herman}
to show that for suitably chosen $\alpha$ 
the boundary of the Siegel disk $S$ of $h$ is a Jordan curve.
Actually, 
by a  recent result of Zakeri~\cite{Zakeri}, this is the case
if $\alpha$ has bounded type.
Thus in this case the Baker domain $V$ of $h$ is bounded by a Jordan 
curve on the sphere.
Let $\gamma$ be a Jordan arc in $\partial V\cap\{z:\im z>2\pi\}$.
It follows from the construction 
that $\gamma\subset\partial W$ and that the points of $\gamma$ 
are accessible from~$W$.
Thus $\partial U$ contains the Jordan arc $\psi(\gamma)$ consisting of 
points accessible from~$U$. This implies that
$\Xi \not=\partial\D$.
Thus $f_1=f$ and $U_1=U$ have the desired property.

The construction of $f_2$ and $U_2$ is similar.
Here our starting point are the functions
 $g(z)=\frac{1}{2}z^2e^{2-z}$
and $h(z)=2-\log 2+2z-e^z$  considered in~\cite{Bergweiler1995}.
The function $g$ has a superattracting basin $B$ at the origin which
is bounded by a Jordan curve and $V=\log B$ is a hyperbolic
Baker domain of $h$ where $h$ is univalent.
By a similar reasoning as in the proof of Theorem~\ref{thm1}
we will now use quasiconformal surgery to construct an
entire function $f_2$ with a Baker domain $U_2$ where 
$f$ is not univalent.

Here we put, for large a positive integer $M$,
$$S(M)=\{z\in\mathbb{C}:| \re z+ 2\pi M|<2\pi, \;
\im z< -2\pi\}\cup D(-2\pi M-2\pi i,2\pi)$$
and
$$T(M)=\{z\in\mathbb{C}: 
| \re z+ 2\pi M|<\pi, \; \im z< -2\pi\}\cup D(-2\pi M-2\pi i,\pi).$$
Next we put
$k(z)=2-\log 2+2z +\exp(e^{-iz}-L)$
for a large constant~$L$ and define 
$F:\C\to\C$ by $F(z)=h(z)$ for $z\in \C\setminus S(M)$,
by $F(z)=k(z)$ for $z\in \overline{T(M)}$ and by interpolation
in $S(M)\setminus  \overline{T(M)}$.
Similarly as in the proof of Theorem~\ref{thm1} we see that $F$ is 
quasiregular if $M$ and $L=L(M)$ are large enough
and that there exists a quasiconformal map $\psi$ 
such that $f_2=\psi\circ F\circ \psi^{-1}$ is a transcendental
entire function.
Noting that $V\cap\H$ is invariant under $h$ and thus under $F$
we see that $F$ has a Baker domain $W$ containing $V\cap\H$.
Thus $U_2=\psi(W)$ is a Baker domain of~$f_2$, and the construction
shows that $\partial U_2$ contains a Jordan arc consisting
of points accessible from~$U_2$.

To show that $U_2$ is hyperbolic we use again
Lemma~\ref{lemma2.5}, noting that the domain
$V_0=\{z\in\C: \re z< 3\pi M\}$
is absorbing for $h$ and $F$ in $V$ and that $\psi$ is analytic in~$V_0$.
Finally, the reasoning that $f_2$ is not univalent in $U_2$
is similar to that in the proof of Theorem~\ref{thm1}.

\end{document}